\documentclass[11pt]{article}
\usepackage[margin=1in]{geometry}
\usepackage{amsmath,amssymb,amsthm}
\usepackage{hyperref}

\newtheorem{theorem}{Theorem}
\newtheorem{lemma}[theorem]{Lemma}
\newtheorem{corollary}[theorem]{Corollary}
\newtheorem{proposition}[theorem]{Proposition}
\theoremstyle{remark}

\newcommand{\ch}{\operatorname{ch}}
\DeclareMathOperator{\ind}{ind}

\title{Two-distance and list-two-distance coloring of cacti:\\
the subcubic case and the $C_5$ obstruction}
\author{Vaibhav Suvagiya\\[2pt]
\normalsize Sardar Vallabhbhai National Institute of Technology, Surat, Gujarat, India}
\date{}

\begin{document}
\maketitle

\begin{abstract}
The square $G^2$ of a graph joins two vertices at distance at most two; a proper
coloring of $G^2$ is a $2$-distance coloring of $G$. For a cactus $G$ (every edge on
at most one cycle) we determine both the $2$-distance chromatic number $\chi(G^2)$ and
the choice number $\ch(G^2)$ exactly: they are always equal, and the common value is
$\Delta+1$ if $\Delta\ge4$, is $4$ if $\Delta=3$ and $G$ has no block equal to $C_5$,
is $5$ if $\Delta=3$ and $G$ has a $C_5$ block, and is the classical value if
$\Delta\le2$. For $\Delta\ge6$ the value $\Delta+1$ is already known, since cacti are
outerplanar and hence $K_{2,3}$-minor-free (Hetherington--Woodall; Agnarsson--Halld\'orsson).
Our contribution is the small-degree regime. For subcubic cacti we obtain a complete
classification in both the ordinary and list settings, with the $5$-cycle as the
\emph{unique} obstruction; the list statement has no prior analogue and is a genuine
positive instance of the List Square Coloring Conjecture, which is false in general.
A single elimination order then handles all $\Delta\ge4$ uniformly and, in particular,
settles the two cases $\Delta\in\{4,5\}$ that the superclass bounds leave at
$\Delta+2$. The number $5$ turns out to be one obstruction wearing three disguises:
$C_5^2=K_5$, the Frobenius number of $\{3,4\}$, and a degenerate $K_4$ list-coloring
instance.
\end{abstract}

\section{Introduction}

For a graph $G$, its \emph{square} $G^2$ has vertex set $V(G)$ with $uv\in E(G^2)$
whenever $1\le d_G(u,v)\le2$. A proper coloring of $G^2$ is a \emph{$2$-distance
coloring} of $G$: vertices within distance two get distinct colors. Write $\chi(G^2)$
for the chromatic number and $\ch(G^2)$ for the choice (list-chromatic) number; always
$\ch(G^2)\ge\chi(G^2)\ge\Delta+1$, as the closed neighborhood of a maximum-degree
vertex is a clique of $G^2$.

A \emph{cactus} is a connected graph in which every edge lies on at most one cycle;
equivalently every block (maximal $2$-connected subgraph) is an edge or a cycle. Cacti
generalize trees, for which $\chi(T^2)=\Delta+1$ and $T^2$ is chordal. We determine the
two parameters for all cacti and, more to the point, resolve the subcubic case
completely.

\paragraph{The large-degree story is already settled — on a much larger class.}
Every cactus is outerplanar (Proposition~\ref{prop:op}), hence both $K_4$-minor-free
and $K_{2,3}$-minor-free. For these superclasses the squares are well understood when
$\Delta$ is large: Agnarsson and Halld\'orsson~\cite{ah} showed that for an outerplanar
graph $\ind(G^2)=\Delta$ and $\chi(G^2)=\ch(G^2)=\Delta+1$ once $\Delta\ge7$, and
Hetherington and Woodall~\cite{hw11} proved $\ch(G^2)=\chi(G^2)=\Delta+1$ for
$K_{2,3}$-minor-free graphs with $\Delta\ge6$ (with $\ch(G^2)\le\Delta+2$ for
$\Delta\ge3$); Lih, Wang and Zhu~\cite{lwz} and Hetherington--Woodall~\cite{hw08} give
the companion $K_4$-minor-free bound $\lfloor3\Delta/2\rfloor+1$. What these leave open
is precisely the small-degree regime $\Delta\in\{3,4,5\}$, where the best general bound
is $\Delta+2$. That regime is our subject, and for cacti we settle it.

\paragraph{Main result: the subcubic classification.}
The distinctive case is $\Delta=3$, where the superclass bound $\chi(G^2)\le5$ is
attained (by $C_5$ with a chord) but the truth on cacti is smaller and is governed by a
single forbidden block.

\begin{theorem}\label{thm:subcubic}
Let $G$ be a cactus with $\Delta(G)=3$. Then
\[
\chi(G^2)=\ch(G^2)=
\begin{cases}
4, & \text{if $G$ has no block isomorphic to $C_5$},\\
5, & \text{if $G$ has a block isomorphic to $C_5$}.
\end{cases}
\]
\end{theorem}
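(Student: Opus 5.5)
The argument has two halves: lower bounds, which are immediate, and a list-coloring upper bound, which carries all the work. Since $\chi(G^2)\le\ch(G^2)$ always holds, it suffices to show $\ch(G^2)\le 5$ for every subcubic cactus, $\ch(G^2)\le 4$ when no block is $C_5$, and $\chi(G^2)\ge 5$ when some block is $C_5$. The lower bound $\chi(G^2)\ge\Delta+1=4$ holds in general. If $C$ is a $C_5$ block, then any two of its vertices are at distance at most $2$ inside $C$. Distances in $G$ can only be shorter, so $C^2=K_5$ is a subgraph of $G^2$ and $\chi(G^2)\ge5$. For the upper bound $5$, I would use a greedy argument. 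In a subcubic cactus every vertex $v$ has at most $3$ neighbours. Each neighbour contributes at most $2$ further second neighbours, and cycle blocks create coincidences. The crude count $|N_{G^2}(v)|\le 9$ is therefore too weak. Instead I would order the vertices by a leaf-block elimination, in the spirit of the elimination order the abstract promises for $\Delta\ge4$, so that each vertex has at most $4$ already-coloured $G^2$-neighbours when it is coloured.

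For the main case (no $C_5$ block, target $\ch(G^2)\le4$), I would take a minimal counterexample $G$ with $4$-lists $L$ and look at an end block $B$ of the block-cut tree, attached at a cut vertex $x$. Deleting $B-x$ yields a smaller cactus $G'$ with $\Delta(G')\le 3$. If $\Delta(G')$ drops below $3$, a direct path-or-cycle argument applies, with care for $C_5$ itself, but $C_5$ only arises as a block of $G$, which is excluded. By minimality $G'^2$ is $L$-colourable, and I then extend the colouring to $B-x$, distinguishing two cases.

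\emph{$B$ is a pendant edge $xy$.} Vertex $y$ sees $x$ and at most $2$ other neighbours of $x$, so it has at most $3$ coloured $G^2$-neighbours and can be coloured.

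\emph{$B$ is a cycle $C_k$ through $x$.} Then $x$ has degree $3$, with exactly one neighbour $w$ outside $B$. The constraints come from $x$ and $w$: the two cycle neighbours of $x$ each see $x$ and $w$, and every other cycle vertex sees neither. I must list-colour $C_k^2$ minus $x$, where the two vertices adjacent to $x$ each have $2$ forbidden colours and the two vertices at distance $2$ from $x$ have $1$ forbidden colour. This is a list-colouring of a small-degree "path-square" with reduced lists. For $k\ge6$, and for $k=3,4$, I would colour it by hand in a good order, starting from the middle of the path and ending at the constrained ends, using Hall-type or degeneracy arguments. The key fact is that $C_k^2$ is $4$-choosable exactly when $k\ne5$. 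This matches the known $\ch(C_k^2)$ values and the Frobenius remark about $\{3,4\}$: $C_k^2$ is $3$-colourable iff $3\mid k$, and $4$-colourable iff $k$ is a nonnegative combination of $3$ and $4$, which fails only for $k=5$ (and $1,2$).

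The main obstacle is the cycle-block extension with reduced lists, particularly for small $k$ such as $k=4$ and $k=7$. There the remaining vertices form a near-clique: for $k=4$ the three non-$x$ vertices are mutually adjacent in $G^2$, and two of them carry lists of size $2$. Naive greedy fails there. If it does, I would strengthen the induction hypothesis, for instance by choosing the end block more cleverly (a block farthest from a root, so that $w$'s side is also reducible) or by recolouring $x$ and $w$. For the $5$-bound I would run the same induction with lists of size $5$. There a $C_5$ end block leaves the four non-$x$ vertices with forbidden counts $2,2,1,1$ and lists of size $3,3,4,4$ on $K_4$. This is the "degenerate $K_4$" instance, which is colourable greedily in the order: large lists last.
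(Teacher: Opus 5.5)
\textbf{There is a genuine gap at the central step.} Your lower bounds and your $\ch(G^2)\le5$ argument are fine. Your leaf-block induction also reduces the list bound $4$ to the right extension problem: colour $C_k^2-x$ where $y_1,y_{k-1}$ have lists of size $\ge2$, $y_2,y_{k-2}$ have lists of size $\ge3$, the rest have size $\ge4$, and the two ends are adjacent through $x$. This is the same reduction the paper makes. But that extension lemma is the entire content of the list upper bound, and you do not prove it.
\begin{itemize}
\item The fact you lean on, that $C_k^2$ is $4$-choosable for $k\ne5$, does not imply the lemma. Here $x$ is fixed to one colour and an extra colour $b$ is removed at $y_1,y_{k-1}$. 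That is strictly more than $4$-choosability of $C_k^2$, and a $4$-list at $x$ cannot simulate a precoloured vertex.
\item The order you suggest (middle first, constrained ends last) is backwards for greedy. For $k\ge6$, $y_1$ has only $2$ available colours but three neighbours $y_2,y_3,y_{k-1}$ in $C_k^2-x$, so coloured last it can be blocked.
\item Your fallbacks (strengthening the hypothesis, recolouring $x$ and $w$) are not carried out.
\end{itemize}

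\textbf{What is needed, as the paper does it.} Colour the two ends $y_1,y_{k-1}$ first, with distinct colours.
\begin{itemize}
\item For $k\ge7$, no remaining vertex sees both ends. The residual squared path on $y_2,\dots,y_{k-2}$ then has lists at least its degrees. It is $2$-connected and neither complete nor an odd cycle (it contains an induced $K_4-e$), so it is not a Gallai tree. The Borodin/Erd\H{o}s--Rubin--Taylor theorem then colours it. This is a non-greedy tool your sketch lacks.
\item For $k=6$, the hub $y_3$ sees both ends. The residual is a triangle with $2$-lists, and it fails exactly when all three lists equal one common $2$-set $S$. One must then re-choose the end colours: use an end colour outside $L(y_3)$ if one exists; otherwise put a colour of $S$ at an end; otherwise swap the two end colours.
\end{itemize}
This $k=6$ wheel is the delicate case, and your proposal does not address it. The cases you flagged are not the hard ones: $k=4$ is an immediate triangle with lists $2,3,2$, and $k=7$ falls under the Gallai-tree argument.
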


To our knowledge no prior result gives the exact $2$-distance chromatic number of
subcubic cacti, and the list version has no analogue in the literature; the nearest
statement is the upper bound $\ch(G^2)\le5$ for $K_{2,3}$-minor-free
graphs~\cite{hw11}. Combined with the large-degree cases this yields a complete
description.

\begin{theorem}\label{thm:main}
For every cactus $G$, $\ch(G^2)=\chi(G^2)$; the common value is $\Delta+1$ if
$\Delta\ge4$, is given by Theorem~\ref{thm:subcubic} if $\Delta=3$, and is the classical
value \textup{(}$3$ if $3\mid n$, $5$ if $G=C_5$, at most $3$ for a path, else $4$\textup{)}
if $\Delta\le2$. In particular the List Square Coloring Conjecture, false in
general~\cite{kimpark} and false even within some restricted planar
families~\cite{hasanvand}, holds for every cactus.
\end{theorem}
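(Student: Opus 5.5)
We prove Theorem~\ref{thm:main} by splitting on $\Delta$. Throughout we use the chain $\Delta+1\le\chi(G^2)\le\ch(G^2)$, so in every case it suffices to exhibit a matching upper bound on $\ch(G^2)$ (and, where the answer exceeds $\Delta+1$, a lower bound on $\chi(G^2)$).

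\emph{Case $\Delta\le2$.} Here $G$ is a path or a cycle. For a path $P_n$, $P_n^2$ is chordal with clique number $\min(n,3)$. Chordal graphs satisfy $\ch=\chi=\omega$, so the value is at most $3$.

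For a cycle $C_n$ the graph $C_n^2$ is $4$-regular, and we treat $n$ in four groups.
\begin{itemize}
\item For $n\le5$, $C_n^2=K_n$, which gives $3,4,5$ for $n=3,4,5$.
\item For $3\mid n$ the ordinary coloring $1,2,3,1,2,3,\dots$ works. The list bound $\ch\le3$ needs an argument: $C_n^2$ is $4$-regular, so degeneracy alone gives only $5$. I would use the Alon--Tarsi / orientation method on the circulant $C_n(1,2)$, or cite the known result of Juvan--Mohar--\v{S}krekovski that $\ch(C_n^2)=\chi(C_n^2)$ for $n\ne5$.
\item For $3\nmid n$, $n\ge6$, the independence number of $C_n^2$ is $\lfloor n/3\rfloor$, so $\chi\ge4$.
\item For $n\ge4$ with $n\ne5$, $\ch\le4$. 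Write $n=3a+4b$ (possible since the Frobenius number of $\{3,4\}$ is $5$) for the ordinary coloring. For lists, color greedily around the cycle and fix the last two vertices with a small list-coloring lemma on the final $K_4$-like window.
\end{itemize}
I would simply cite this classical case rather than reprove it.

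\emph{Case $\Delta=3$.} This is exactly Theorem~\ref{thm:subcubic}, which we take as proved.

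\emph{Case $\Delta\ge4$.} The lower bound is $\Delta+1$. For the upper bound I give a single elimination order in which each vertex, when deleted, has at most $\Delta$ neighbors in $G^2$ among the vertices remaining. This yields a greedy list coloring from lists of size $\Delta+1$; equivalently, $G^2$ is $\Delta$-degenerate.

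Induct on the block-cut tree and take a leaf block $B$ attached at a cut vertex $c$.
\begin{itemize}
\item \emph{$B$ is a pendant edge $cv$.} Then $N_{G^2}(v)\subseteq N_G[c]\setminus\{v\}$, so $v$ has at most $\Delta$ neighbors in $G^2$. Delete $v$.
\item \emph{$B$ is a cycle $c,v_1,\dots,v_k,c$.} Every $v_i$ has degree $2$. Take $v_1$, adjacent to $c$: its $G^2$-neighbors are $v_2,v_3$ (or $c$'s side) together with $N_G[c]\setminus\{v_1\}$. That is at most $(\Delta-1)+1+2$, which is too many. Instead delete a middle vertex first: $v_i$ at distance $\ge2$ from $c$ has at most $4\le\Delta$ square-neighbors. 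Keep deleting middle vertices until only $v_1,v_k$ remain. Now $v_1$ and $v_k$ are pendant-like at $c$: $v_1$ sees only $N_G[c]\setminus\{v_1\}$, which has at most $\Delta$ vertices. Delete them.
\end{itemize}
When $k\le2$ (triangles, or $C_4$ with both $v$'s adjacent to $c$), check directly that each vertex sees at most $N_G[c]$ plus at most one extra vertex, giving at most $\Delta$. This needs care because the extra vertex is $v_2$ while $v_2\in N(c)$ already.

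The delicate point is that deletions must not increase the degree of $c$: the counts above use $\deg_G(c)\le\Delta$ in the original $G$, and deleting vertices only shrinks neighborhoods, so the bound persists. I expect the main obstacle to be precisely this bookkeeping for short leaf cycles, the triangle and $C_4$, where the hypothesis $\Delta\ge4$ is actually needed: for $\Delta=3$ a $C_5$ leaf block forces a vertex with $4>\Delta$ remaining neighbors. This is the $K_4$-degeneracy failure underlying the $\Delta=3$ exception.

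The final sentence of Theorem~\ref{thm:main}, that the List Square Coloring Conjecture holds for every cactus, is then immediate, since $\ch(G^2)=\chi(G^2)$ in all three cases.
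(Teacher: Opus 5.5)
Your proposal is correct and follows the paper's route. You split on $\Delta$, use the same elimination order for $\Delta\ge4$ (delete the cycle vertices not adjacent to $c$ first, then the two neighbours of $c$, each of which then sees only $N_G[c]$ minus itself), invoke Theorem~\ref{thm:subcubic} for $\Delta=3$, and cite the classical path and cycle values for $\Delta\le2$.

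Two small fixes are needed. The Juvan--Mohar--\v{S}krekovski theorem concerns total choosability of cycles, i.e.\ $T(C_m)\cong C_{2m}^2$, so it only covers even $n$ (it misses, e.g., $C_9^2$); cite Prowse--Woodall for all $n$, as the paper does. Also, the recursion is valid not because ``neighbourhoods shrink'' but because removing the private part of a leaf block leaves distances among the survivors unchanged (Lemma~\ref{lem:sq}), so later counts in $G-P$ are genuine counts in the fixed $G^2$.
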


\paragraph{One obstruction, three disguises.}
The value $5$ is the whole story of the subcubic case, and it enters through a single
combinatorial fact seen three ways. As a \emph{graph}, $C_5^2=K_5$ needs five colors,
so a $C_5$ block forces $\chi(G^2)\ge5$. As a \emph{number}, $5$ is the Frobenius
number of $\{3,4\}$: our cycle-coloring construction tiles a cycle of length $L$ by
blocks of lengths $3$ and $4$, which is possible for every $L\ge3$ except $L=5$. And in
the \emph{list} setting, the one place the cycle argument can fail is when a squared
$5$-cycle presents a $K_4$ with lists $(2,3,3,2)$ inside a common $3$-set — again the
$C_5$. The three appearances coincide, and organizing the subcubic section around them
is what makes the classification more than case-checking.

\paragraph{The tool, stated uniformly.}
Both the subcubic upper bound and all of $\Delta\ge4$ come from one elimination order.

\begin{theorem}\label{thm:degen}
For every cactus $G$, the square $G^2$ is $\max(\Delta,4)$-degenerate. Hence
$\ch(G^2)\le\Delta+1$ if $\Delta\ge4$, and $\ch(G^2)\le5$ if $\Delta\le3$.
\end{theorem}

The point is uniformity: a single two-paragraph argument covers every $\Delta\ge4$,
whereas the outerplanar degeneracy identity $\ind(G^2)=\Delta$ of~\cite{ah} required
$\Delta\ge7$. For $\Delta\ge6$ the resulting equality $\ch=\chi=\Delta+1$ is not new
(it is~\cite{hw11,ah}); the added value is the cases $\Delta\in\{4,5\}$, where only
$\ch(G^2)\le\Delta+2$ was previously available. Related work on cacti concerns other
parameters ($2$-tone coloring, $(2,1)$-total labeling); we are not aware of the
degeneracy statement of Theorem~\ref{thm:degen} in print, and we make no novelty claim
for the large-degree values themselves.

\section{Preliminaries}

$N_G(v)$, $N_G[v]$ are the open/closed neighborhoods and $d_G(v)=|N_G(v)|$. A graph $H$
is \emph{$d$-degenerate} if every subgraph has a vertex of degree $\le d$; equivalently
there is an ordering in which each vertex has $\le d$ later neighbors.

\begin{lemma}\label{lem:deg}
If $H$ is $d$-degenerate then $\ch(H)\le d+1$: color along the reverse order; each
vertex meets $\le d$ colored neighbors and has a list of size $d+1$.
\end{lemma}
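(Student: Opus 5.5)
The plan is to prove Lemma~\ref{lem:deg} by the standard greedy argument, run along a degeneracy ordering. First I will turn the subgraph formulation of degeneracy into an explicit vertex ordering, since the lemma's one-line proof uses the ordering form. Given a $d$-degenerate $H$ on $n$ vertices, I peel off vertices one at a time. Set $H_1=H$ and let $v_1$ be a vertex of degree at most $d$ in $H_1$, which exists because $H_1$ is a subgraph of $H$. Then set $H_2=H_1-v_1$, choose $v_2$ of degree at most $d$ in $H_2$, and continue in this way. This yields an ordering $v_1,\dots,v_n$ in which each $v_i$ has at most $d$ neighbors among the later vertices $v_{i+1},\dots,v_n$, because those later vertices are exactly the other vertices of $H_i$. (Conversely, such an ordering certifies degeneracy: in any subgraph, the earliest vertex has all its neighbors later, hence at most $d$ of them. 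This justifies the ``equivalently'' in the definition, though the lemma only needs the forward direction.)

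Next, fix arbitrary lists $L(v)$ with $|L(v)|=d+1$ and color in the reverse order $v_n,v_{n-1},\dots,v_1$. When $v_i$ is reached, the vertices already colored are exactly $v_{i+1},\dots,v_n$, so at most $d$ neighbors of $v_i$ carry a color. These forbid at most $d$ colors of $L(v_i)$, which leaves at least one admissible color; I assign it to $v_i$. By induction on the number of colored vertices, every partial coloring produced is proper. Hence the final coloring is a proper $L$-coloring, and since the lists were arbitrary, $\ch(H)\le d+1$.

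There is no genuine obstacle here. The only point needing care is the bookkeeping of ``later'' versus ``earlier'': the order in which vertices are removed must be the reverse of the order in which they are colored, so that each vertex sees only its at most $d$ later neighbors as already colored.
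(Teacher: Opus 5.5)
Your proof is correct and is the same greedy argument the paper gives: color in the reverse of a degeneracy ordering, so each vertex sees at most $d$ already-colored neighbors and keeps a free color from its list of size $d+1$. Your derivation of the ordering from the subgraph definition just makes explicit the ``equivalently'' that the paper states without proof.
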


A connected graph is a \emph{Gallai tree} if every block is complete or an odd cycle.

\begin{theorem}[Borodin~\cite{borodin}; Erd\H os--Rubin--Taylor~\cite{ert}]\label{thm:ert}
If $H$ is connected, $|L(v)|\ge d_H(v)$ for all $v$, and $H$ is not a Gallai tree, then
$H$ is $L$-colorable.
\end{theorem}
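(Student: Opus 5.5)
The plan is to reduce to a small family of \emph{degree-choosable} configurations and then extend a coloring of the rest of $H$ greedily, ordering vertices by distance.

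\textbf{Step 1 (slack lemma).} If some vertex $v$ has $|L(v)|>d_H(v)$, then $H$ is $L$-colorable. Order $V(H)$ by non-increasing distance from $v$ in $H$ and color greedily in this order. Every $u\ne v$ has a neighbor strictly closer to $v$ that is still uncolored when $u$ is colored. So $u$ sees at most $d_H(u)-1$ colored neighbors, and a color remains in $L(u)$. Finally $v$ sees $d_H(v)<|L(v)|$ colors. A relative form follows by the same argument. Let $S\subseteq V(H)$ induce a connected subgraph $F$, and color $H-S$ greedily by non-increasing distance from $S$. Each $u\in S$ keeps a residual list $L'(u)$ with $|L'(u)|\ge d_F(u)$. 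Hence it suffices to find an induced subgraph $F$ with the following property: $F$ is colorable from \emph{any} lists of sizes $\ge d_F$. Call such an $F$ \emph{degree-choosable}.

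\textbf{Step 2 (structure).} Since $H$ is not a Gallai tree, some block $B$ is neither complete nor an odd cycle. I will invoke Rubin's block lemma: a $2$-connected graph that is neither complete nor an odd cycle contains an induced subgraph $F$ that is either an even cycle or an even cycle with exactly one chord. I expect this to be the main obstacle. I would prove it as follows.
\begin{itemize}
\item If $B$ is a cycle, it is even, and we take $F=B$.
\item Otherwise, $B$ is not complete, so it contains an induced path $x\,y\,z$ with $xz\notin E$.
\item Using $2$-connectivity and an ear decomposition, consider a shortest cycle through such a configuration, or a minimal theta subgraph.
\item Case analysis on the parities of the three paths of an induced theta produces either an even induced cycle or an even cycle with a single chord. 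The case where all induced cycles are odd forces a chord pattern yielding $K_4-e$ (a $4$-cycle with one chord), which also qualifies.
\end{itemize}

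\textbf{Step 3 (the configurations are degree-choosable).} First let $F$ be an even cycle with lists of size $2$. If all lists are equal, $2$-color it alternately. Otherwise there is an edge $uw$ with some $c\in L(u)\setminus L(w)$. Color $u$ with $c$, then color around the cycle away from $w$, ending at $w$. The vertex $w$ has one colored neighbor whose color is not $c$ but could block one color, while its neighbor $u$ has the color $c\notin L(w)$. So $w$ sees at most one forbidden color among its $2$ options.

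Next let $F$ be an even cycle with one chord $ab$. The degree-$3$ vertices $a,b$ have lists of size $3$, and the rest have size $2$. Pick a color for $a$ that is not in the list of some neighbor on the even cycle $C'$ that avoids $b$'s constraints, or else exploit the equal-lists case. The cleanest route is to delete the chord: color $a$ with a color chosen so that the residual problem on the even cycle through $a$ is solvable by the first case. Concretely, remove $a$'s color from $L(b)$. Then $b$ has $\ge2$ colors and the cycle minus $a$ becomes a path, with the endpoint neighbors of $a$ losing at most one color. Color this path greedily starting from the end away from the vertex that has slack. A short check handles the subcase where all lists coincide, which is impossible to obstruct because $a$ and $b$ have $3$ colors.

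\textbf{Step 4.} Apply the relative form of Step 1 with $S=V(F)$. Since $F$ is induced, the residual lists satisfy $|L'(u)|\ge d_F(u)$, and Step 3 colors $F$, completing the $L$-coloring of $H$.
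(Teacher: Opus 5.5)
Your outline is the classical Erd\H{o}s--Rubin--Taylor proof. The paper gives no proof of its own; it only quotes the result. Your Steps 1, 3a and 4 are correct, but there are gaps in Steps 3b and 2.

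\textbf{Step 3b.} It fails as written. If $a$ gets an arbitrary color $\alpha$, the path $F-a$ keeps residual lists no larger than its path-degrees. The vertex $b$ keeps $\ge 2$ colors but has path-degree $2$, and $a^\pm$ keep only $\ge 1$. So there is no ``vertex that has slack'', and a path with degree-sized lists need not be colorable. For example, take $F=K_4-e$ with cycle $a\,a^+\,b\,a^-$, chord $ab$, and lists $L(a)=L(b)=\{1,2,3\}$, $L(a^+)=\{1,2\}$, $L(a^-)=\{1,3\}$. Coloring $a$ with $1$ leaves $\{2\},\{2,3\},\{3\}$ on the path $a^+\,b\,a^-$, which cannot be colored. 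The missing step is already in your first sentence, and it needs no ``equal-lists'' fallback. Since $|L(a)|=3>2=|L(a^+)|$, a color $\alpha\in L(a)\setminus L(a^+)$ always exists. With this choice $a^+$ keeps both colors but has only one neighbor in $F-a$. Coloring the path greedily from $a^-$ toward $a^+$ then succeeds, and parity plays no role.

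\textbf{Step 2.} Citing Rubin's block lemma as a black box is legitimate, since it is the lemma inside ERT. However, your sketch does not prove it, and its last claim is false. Take $C_7=c_0c_1\cdots c_6$ plus a vertex $p$ adjacent to $c_0,c_3,c_6$. This graph is $2$-connected, neither complete nor a cycle, and all its induced cycles (lengths $3,5,5,7$) are odd. It has a single triangle, hence no $K_4-e$ at all. The subgraph Rubin's lemma supplies is the $6$-cycle $p\,c_3c_2c_1c_0c_6$ with the single chord $pc_0$.

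Your parity remark about an induced theta is correct: two of its three paths have equal parity, and their union is an even cycle, chorded only if the third path is an edge. What the sketch never does is produce an induced theta or an induced even cycle. One way:
\begin{itemize}
\item Take your induced path $xyz$ and a shortest $x$--$z$ path $R$ in $B-y$.
\item An even segment of $R$ between consecutive neighbors of $y$ gives an induced even cycle.
\item Two consecutive odd segments give an even cycle whose only chord is at $y$.
\item Otherwise $y\cup R$ is an induced odd cycle $C$ with $|C|\ge5$.
\item If some vertex off $C$ has $\ge 2$ neighbors on $C$, apply the same segment argument to the arcs they cut out. Use $|C|\ge 5$ to pick two consecutive odd arcs whose complement has length $\ge 2$.
\item Otherwise a shortest $C$-ear together with $C$ is an induced theta, and your parity remark finishes.
\end{itemize}
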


In $C_n^2$, vertices are adjacent iff their cyclic distance is $1$ or $2$; a proper
coloring is a cyclic sequence with every three consecutive entries distinct. Prowse and
Woodall~\cite{pw} proved powers of cycles are chromatic-choosable, so
$\ch(C_n^2)=\chi(C_n^2)$, which is $3$ if $3\mid n$, $5$ if $n=5$, and $4$ otherwise.

\begin{proposition}\label{prop:op}
Every cactus is outerplanar, hence has no $K_4$-minor and no $K_{2,3}$-minor.
\end{proposition}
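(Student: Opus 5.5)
The plan is to use the block structure directly. A cactus is connected and every block is an edge or a cycle. For outerplanarity, take any block decomposition and embed blocks one at a time along the block--cut tree. Start from a root block: an edge, or a cycle drawn as a convex polygon, has all its vertices on the outer face. Now suppose the cactus built so far is drawn with every vertex on the outer face, and attach a new block $B$ at the cut vertex $v$. The block $B$ meets the current graph only in $v$. Draw $B$ as a small edge or polygon inside a thin wedge of the outer face at $v$, touching the existing drawing only at $v$. Then every vertex of $B$ lies on the outer face, and no existing vertex is enclosed, since the wedge lies in the outer face. By induction on the number of blocks, $G$ has a planar embedding with all vertices on the outer face, so $G$ is outerplanar.

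For the minor statement, the cleanest route avoids the embedding and works blockwise. A $K_4$-minor or $K_{2,3}$-minor is $2$-connected, because both $K_4$ and $K_{2,3}$ are $2$-connected. By the standard fact that a $2$-connected minor of $G$ is a minor of a single block of $G$, any such minor must be a minor of some block. Every minor of an edge or a cycle is a cycle, a path, or smaller, so it has at most one cycle in its $2$-connected part. Hence no block can contain $K_4$ or $K_{2,3}$, which have several cycles sharing edges. Alternatively, I could cite the theorem of Chartrand--Harary that a graph is outerplanar if and only if it has no $K_4$- and no $K_{2,3}$-minor, and deduce the second statement from the first.

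The only delicate point is justifying the reduction of a $2$-connected minor to a single block. I would argue it directly. Let $H$ be a $2$-connected minor of $G$, with branch sets $X_u$ for $u\in V(H)$. If the branch sets used edges from two different blocks in an essential way, contract the block--cut tree path between them. Some cut vertex $c$ would then separate the branch sets into two nonempty groups. The branch set containing $c$ (if any) would be a cut vertex of $H$, and if no branch set contains $c$, then $H$ is disconnected. Either way this contradicts the $2$-connectivity of $H$. So only the edges of one block carry the edges of $H$ between distinct branch sets, and restricting the branch sets to that block gives $H$ as a minor of the block.
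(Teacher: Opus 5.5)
Your argument is correct, but it runs in the opposite direction from the paper's. The paper never builds an embedding. It proves minor-exclusion first: $K_4$ via treewidth $\le2$, and for $K_{2,3}$ it notes that maximum degree $3$ turns the minor into a subdivision, which as a $2$-connected subgraph lies in one block, and a cycle has at most two internally disjoint paths between two vertices. It then uses the hard direction of Chartrand--Harary to get outerplanarity. You construct the outerplanar drawing directly along the block--cut tree. After that, minor-exclusion needs only the easy direction: outerplanarity is minor-closed, and $K_4$, $K_{2,3}$ are not outerplanar. This is the more elementary route and avoids a Kuratowski-type theorem altogether.

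Your blockwise alternative handles $K_4$ and $K_{2,3}$ uniformly. The price is that it needs a $2$-connected \emph{minor} to lie in a single block, which is more delicate than the subgraph version the paper gets away with by passing to subdivisions. Your sketch has the right idea but should be phrased precisely:
\begin{itemize}
\item Fix one realizing edge of $G$ for each edge of $H$.
\item If two of these lie in different blocks, take a cut vertex $c$ on the block--cut tree path between those blocks.
\item Observe that every branch set avoiding $c$ lies inside one component of $G-c$.
\item Check that both sides of the resulting split of $H-w$ (where $c\in X_w$), or of $H$ if no branch set contains $c$, are nonempty.
\end{itemize}
The final restriction step also needs each $X_u\cap V(B)$ to be nonempty, since each vertex of $H$ is incident to a realizing edge. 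It also needs each $X_u\cap V(B)$ to be connected: a path between two vertices of $B$ that leaves $B$ must exit and re-enter through the same cut vertex, so it can be shortcut inside $B$.
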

\begin{proof}
Outerplanar graphs are exactly those with no $K_4$- and no $K_{2,3}$-minor
(Chartrand--Harary). A cactus has treewidth $\le2$, so no $K_4$-minor. A $K_{2,3}$-minor
has maximum degree $3$, so the minor is in fact a subdivision, and being $2$-connected it
lies inside a single block of $G$. But a block of a cactus is an edge or a cycle, which
has at most two internally disjoint paths between any two vertices, so it contains no
subdivision of $K_{2,3}$.
\end{proof}

\begin{lemma}\label{lem:sq}
Let $B$ be a leaf block of a cactus $G$ with cut vertex $c$ and private vertices
$P=V(B)\setminus\{c\}$. Then $(G-P)^2=G^2[V(G)\setminus P]$, and within distance two a
private vertex sees only $c$ and $N_G(c)$.
\end{lemma}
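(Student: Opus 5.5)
The plan is to prove both parts of Lemma~\ref{lem:sq} directly from the definition of distance, using one structural fact about leaf blocks. Since $B$ is a leaf block with unique cut vertex $c$, every path in $G$ from a private vertex $p\in P$ to a vertex outside $B$ passes through $c$. Moreover, every edge of $G$ at a vertex of $P$ is an edge of $B$: an edge $pw$ with $w\notin V(B)$ would have to lie in a different block meeting $B$ at $p$, making $p$ a cut vertex, contrary to $p$ being private.

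\emph{Step 1: $(G-P)^2=G^2[V(G)\setminus P]$.} Both graphs have vertex set $V(G)\setminus P$. Deleting vertices cannot shorten distances, so $d_{G-P}(u,v)\ge d_G(u,v)$, and every edge of $(G-P)^2$ is an edge of $G^2$. For the converse, take $u,v\notin P$ with $d_G(u,v)\le2$ and a shortest $u$--$v$ path $Q$ in $G$. If $Q$ avoids $P$ we are done. Otherwise $Q$ has length $2$, say $u\,p\,v$ with $p\in P$, since $u$ and $v$ themselves are not in $P$. Then $u,v\in N_G(p)\subseteq V(B)$, and as $u,v\notin P$ both equal $c$, contradicting $u\neq v$. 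Hence $Q$ avoids $P$ and $d_{G-P}(u,v)\le 2$.

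\emph{Step 2: what a private vertex sees.} Let $p\in P$ and let $w\notin V(B)$ with $d_G(p,w)\le2$. A shortest $p$--$w$ path leaves $B$, so it passes through $c$. It cannot end at $c$, because $w\ne c$. So it has the form $p\,c\,w$, which gives $p\sim c$ and $w\in N_G(c)$. Therefore the vertices outside $P$ within distance two of $p$ lie in $\{c\}\cup N_G(c)$, which is the statement as written: the part of $G$ seen by $p$ outside its own block is only $c$ and $N_G(c)$.

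The only delicate point is interpreting the second clause. Taken literally over all vertices, it fails for a long cycle block: for $B=C_6$, a private vertex at distance two from $c$ sees other private vertices that are not neighbours of $c$. So I would state the lemma explicitly for $w\in V(G)\setminus P$, or more precisely $w\notin V(B)$, where the argument above is complete. Beyond fixing that reading, there is no real obstacle.
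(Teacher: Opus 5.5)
Your proof is correct and uses the same idea as the paper, namely that $c$ separates $P$ from the rest of $G$. The paper says paths through $P$ must enter and leave via $c$, so all distances in $G-P$ are unchanged (slightly more than you need), while you check only the distance-$\le 2$ case via $N_G(p)\subseteq V(B)$, and your caveat that the second clause must be read for vertices outside $P$ matches the paper's own proof (``a private vertex reaches the rest of $G$ only through $c$'') and its later use (``the only relevant adjacencies are within $B$ and to $c$, $N_G(c)$'').
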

\begin{proof}
A path between two vertices of $G-P$ meeting $P$ enters and leaves through $c$, so is no
shorter than one avoiding $P$; distances in $G-P$ are unchanged. A private vertex reaches
the rest of $G$ only through $c$.
\end{proof}

\section{A uniform degeneracy bound}\label{sec:degen}

\begin{proof}[Proof of Theorem~\ref{thm:degen}]
Let $k=\max(\Delta,4)$. We give an elimination order in which each removed vertex has
$\le k$ remaining $G^2$-neighbors; Lemma~\ref{lem:deg} then finishes.

Repeatedly remove all private vertices of a leaf block, recursing on the rest. If the
remaining graph has no cut vertex it is a single edge or cycle, so $G^2$ has maximum
degree $\le4\le k$ and any order works. Otherwise take a leaf block $B$ with cut vertex
$c$; by Lemma~\ref{lem:sq} the only relevant adjacencies are within $B$ and to $c$,
$N_G(c)$.

If $B=\{c,u\}$ is a pendant edge, then $N_{G^2}(u)=N_G[c]\setminus\{u\}$ has
$\le d_G(c)\le\Delta\le k$ elements; remove $u$.

If $B$ is a cycle $c=x_0,x_1,\dots,x_{L-1}$, each private $x_i$ has degree $2$ in $G$
(not a cut vertex, as $B$ is a leaf block), so in $G^2$ its neighbors lie among the four
cyclic neighbors $x_{i\pm1},x_{i\pm2}$, together with $c$ and $N_G(c)$ when
$i\in\{1,L-1\}$. Put $E=N_G(c)\setminus V(B)$, $|E|=d_G(c)-2\le\Delta-2$. Remove first
every $x_i$ with $2\le i\le L-2$ (those not adjacent to $c$), then $x_1$, then $x_{L-1}$.
Each $x_i$ ($2\le i\le L-2$) has at most its four cyclic neighbors present, degree
$\le4\le k$ (coincidences for small $L$ only help). When $x_1$ is removed, all $x_j$
($2\le j\le L-2$) are gone, so its remaining neighbors lie in $\{c\}\cup E\cup\{x_{L-1}\}$,
size $\le\Delta\le k$. Finally $x_{L-1}$ has remaining neighbors in $\{c\}\cup E$, size
$\le\Delta-1$. So $G^2$ is $k$-degenerate.
\end{proof}

\begin{corollary}\label{cor:high}
If $G$ is a cactus with $\Delta\ge4$ then $\ch(G^2)=\chi(G^2)=\Delta+1$.
\end{corollary}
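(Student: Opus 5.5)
Corollary~\ref{cor:high} follows by combining the two inequalities $\ch(G^2)\ge\chi(G^2)\ge\Delta+1$ and $\ch(G^2)\le\Delta+1$, so the plan is short.

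\emph{Lower bound.} Pick a vertex $v$ with $d_G(v)=\Delta$. Any two vertices of $N_G[v]$ are at distance at most two, since both are adjacent to $v$ or one of them is $v$. Hence $N_G[v]$ is a clique of size $\Delta+1$ in $G^2$, which gives $\chi(G^2)\ge\Delta+1$. The inequality $\ch(H)\ge\chi(H)$ holds for every graph $H$, because constant lists $\{1,\dots,k\}$ reduce list coloring to ordinary $k$-coloring.

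\emph{Upper bound.} Since $\Delta\ge4$, we have $\max(\Delta,4)=\Delta$. Theorem~\ref{thm:degen} therefore says $G^2$ is $\Delta$-degenerate. Lemma~\ref{lem:deg} then gives $\ch(G^2)\le\Delta+1$.

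Chaining the inequalities,
\[
\Delta+1\le\chi(G^2)\le\ch(G^2)\le\Delta+1,
\]
so all three quantities are equal. There is no real obstacle here, since all the substance lies in the degeneracy argument already given.
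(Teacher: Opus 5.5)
Your proof is correct and follows the paper's own argument. The lower bound comes from the clique $N_G[v]$ at a maximum-degree vertex, and the upper bound from Theorem~\ref{thm:degen} with $\max(\Delta,4)=\Delta$ together with Lemma~\ref{lem:deg}.
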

\begin{proof}
Upper bound: Theorem~\ref{thm:degen}. Lower bound: the clique $N_G[v]$.
\end{proof}

\section{The subcubic classification}\label{sec:subcubic}

Throughout this section $\Delta\le3$, so $\Delta+1=4$ while Theorem~\ref{thm:degen}
gives only $\ch(G^2)\le5$. Closing the gap to the exact value $4$ (when no $C_5$ block
is present) is the heart of the paper, and the $5$-cycle — in each of its three
disguises above — is the only obstacle.

\subsection{The cycle lemma and the Frobenius appearance of $5$}

\begin{lemma}\label{lem:cyc}
Let $L\ge3$, $L\ne5$, with cycle $y_0,\dots,y_{L-1}$. For any $a$ and any
$b\ne a$ (or $b=\varnothing$) from $\{1,2,3,4\}$, there is a proper $4$-coloring of
$C_L^2$ with $y_0=a$ and $y_1,y_{L-1}\notin\{a,b\}$.
\end{lemma}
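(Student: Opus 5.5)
By symmetry of the colors I may relabel so that $a=1$, and when $b\ne\varnothing$ also $b=2$. It then suffices to exhibit, for every $L\ge3$ with $L\ne5$, a cyclic sequence $c_0,\dots,c_{L-1}$ over $\{1,2,3,4\}$ in which every three cyclically consecutive entries are distinct, $c_0=1$, and $c_1,c_{L-1}\in\{3,4\}$. The case $b=\varnothing$ needs nothing extra, because its condition is weaker.

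The construction follows the Frobenius remark in the introduction. Since $5$ is the only integer $\ge3$ that is not a nonnegative combination of $3$ and $4$, I write $L=3p+4q$ with $p,q\ge0$. I then build the sequence by concatenating blocks. A $3$-block is $1,3,4$ and a $4$-block is $1,3,2,4$. Every block starts with $1$, follows it with $3$, and ends with $4$.

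To verify the sequence, I check the triples of consecutive entries. Triples lying inside a single block are $1,3,4$, or $1,3,2$ and $3,2,4$; all of these are distinct. A triple straddling a boundary between two blocks has the form $(\ast,4,1)$ or $(4,1,3)$. The entry $\ast$ is the second-to-last entry of the earlier block, which is $3$ or $2$, so these triples are distinct too. Since every block has length at least $3$, no triple meets three blocks. Closing the cycle is just another block boundary, because the last block ends in $4$ and the first block starts with $1$. This gives $c_0=1$, $c_1=3$ and $c_{L-1}=4$, all as required.

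Two degenerate cases need explicit checks. When $L=3$ or $L=4$ there is a single block, and its wraparound triples are $(4,1,3)$ together with $(3,4,1)$ or $(2,4,1)$, all of which are fine. Coincidences of vertices for small $L$ are harmless because the condition is stated purely on consecutive triples. The main obstacle is just the bookkeeping at these boundaries; the exclusion of $L=5$ enters only through the representation $L=3p+4q$.
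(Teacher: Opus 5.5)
Your proof is correct and is essentially the paper's argument. It uses the same blocks $(1,3,4)$ and $(1,3,2,4)$, concatenated according to $L=3p+4q$ and checked on windows of three, followed by the relabeling $1\mapsto a$, $2\mapsto b$; your explicit check of the single-block cases $L\in\{3,4\}$ and of the wraparound boundary is slightly more careful than the paper's one-line verification.
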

\begin{proof}
A proper coloring of $C_L^2$ is a cyclic sequence with every three consecutive entries
distinct. Take blocks $A=(1,3,4)$, $B=(1,3,2,4)$: inside each, every window of three is
rainbow; each begins $1,3$ and ends $4$; across a boundary the windows $(\cdot,4,1)$,
$(4,1,3)$, $(1,3,\cdot)$ are rainbow. So any cyclic concatenation of $A$'s and $B$'s is
proper, begins $1,3$, ends $4$. The Frobenius number of $\{3,4\}$ is $5$, so every
$L\ge3$ with $L\ne5$ is $3s+4t$ with $s,t\ge0$; the corresponding word has $y_0=1$,
$y_1=3$, $y_{L-1}=4$ (color $2$ only interior to $B$-blocks). Permuting
$1\mapsto a,\,2\mapsto b,\,\{3,4\}\mapsto$ the other two colors proves the lemma.
\end{proof}

That $L=5$ is the sole excluded length is exactly the Frobenius appearance of the
obstruction: $5$ is the one cycle length not tileable by $3$'s and $4$'s.

\subsection{Chromatic number}

\begin{theorem}\label{thm:d3chi}
If $G$ is a cactus with $\Delta=3$ and no $C_5$ block, then $\chi(G^2)=4$.
\end{theorem}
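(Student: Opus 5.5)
The lower bound is immediate: a vertex $v$ of degree $3$ gives the clique $N_G[v]$ of size $4$ in $G^2$. For the upper bound I will argue by induction on the number of blocks, peeling off a leaf block $B$ with cut vertex $c$ and private set $P$. Lemma~\ref{lem:sq} says $(G-P)^2=G^2[V(G)\setminus P]$, so any proper $4$-coloring of $(G-P)^2$ is a valid partial coloring of $G^2$, and only $P$ needs to be colored. If $G-P$ has $\Delta\le 2$, it is a path or a cycle. A cycle there cannot be $C_5$, since a cycle left over after peeling is still a block of $G$. The classical values then give at most $4$ colors; paths give at most $3$. So the induction starts, provided I apply the $\Delta\le3$ statement in its natural form ``no $C_5$ block implies $\chi\le4$'' for $\Delta\le 3$.

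\textbf{Extension step.} Since $\Delta\le3$ and $c$ lies in $B$ and in another block, $d_G(c)\in\{2,3\}$. Let $E=N_G(c)\setminus V(B)$, so $|E|\le 1$ when $B$ is a cycle and $|E|\le 2$ when $B$ is an edge. By Lemma~\ref{lem:sq}, a private vertex sees only $c$, $N_G(c)$, and other vertices of $B$.

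\textbf{Pendant edge $B=\{c,u\}$.} Here $u$ is $G^2$-adjacent to $c$ and $E$, at most $3$ colored vertices. One of the $4$ colors remains for $u$.

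\textbf{Cycle block $B=c=y_0,y_1,\dots,y_{L-1}$ with $L\ne5$.} The private vertices $y_1,\dots,y_{L-1}$ conflict with outside colors only at $y_1$ and $y_{L-1}$. These two see $c$ and $E$, and $E$ is either empty or a single vertex $e$. Set $a$ to be the color of $c$, and set $b$ to be the color of $e$, or $b=\varnothing$ if $E$ is empty; note $b\ne a$ since $c$ and $e$ are adjacent. Lemma~\ref{lem:cyc} gives a proper $4$-coloring of $C_L^2$ with $y_0=a$ and $y_1,y_{L-1}\notin\{a,b\}$. Using it on $y_1,\dots,y_{L-1}$ makes the coloring proper inside $B$, since $B^2=C_L^2$ on these vertices. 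It is also proper against $c$ and $e$. The interior vertices $y_2,\dots,y_{L-2}$ see nothing outside $B$.

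I need to check that $B^2$ restricted to $V(B)$ really equals $C_L^2$. Distances between vertices of $B$ in $G$ are realised inside $B$, since any path leaving $B$ must return through the same cut vertex. So the square of $G$ on $V(B)$ is exactly the square of the cycle.

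\textbf{Main obstacle.} The delicate point is the base of the induction when $\Delta$ drops below $3$ after peeling, together with the small cases $L=3,4$. For these, $y_1$ and $y_{L-1}$ may coincide with neighbours, but Lemma~\ref{lem:cyc} already covers them. I expect no further difficulty, since Lemma~\ref{lem:cyc} was designed for exactly this extension.
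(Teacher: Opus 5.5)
Your proof is correct and is essentially the paper's argument: strengthen to all $C_5$-free cacti with $\Delta\le3$, peel a leaf block using Lemma~\ref{lem:sq}, give a pendant vertex a free color, and color a cycle block by Lemma~\ref{lem:cyc} with $a=\varphi(c)$ and $b$ the color of the unique outside neighbor of $c$. The only cosmetic differences are that you induct on blocks rather than vertices, and you settle the single-block base case by the classical values of $\chi(C_n^2)$ and $\chi(P_n^2)$ rather than by Lemma~\ref{lem:cyc} with $b=\varnothing$.
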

\begin{proof}
Lower bound: $N_G[v]\cong K_4$. Upper bound by induction on $|V(G)|$: every $C_5$-free
cactus with $\Delta\le3$ has $\chi(G^2)\le4$. A single vertex, edge, or cycle $C_L$
($L\ne5$) is immediate (Lemma~\ref{lem:cyc}, $b=\varnothing$). Otherwise take a leaf
block $B$, cut vertex $c$, private $P$, and $G'=G-P$; by induction $G'^2$ has a proper
$4$-coloring $\varphi$, proper on $G^2[V(G')]$ by Lemma~\ref{lem:sq}. A pendant-edge
child $u$ sees $\le d_G(c)\le3$ colored vertices, so a color is free. For a cycle child
of length $L\ne5$, $c$ has $\le1$ outside neighbor $w$ (two edges lie inside $B$,
$\Delta\le3$); set $a=\varphi(c)$, $b=\varphi(w)$ ($\varnothing$ if none; $b\ne a$ as
$cw\in E$) and color $B$ by Lemma~\ref{lem:cyc}. This colors all of $C_L^2$ properly, so
$y_1,y_2,y_{L-2},y_{L-1}$ (the private vertices within distance two of $c$) differ from
$\varphi(c)$ automatically, while $y_1,y_{L-1}$ (the only ones within distance two of
$w$) avoid $\varphi(w)$ by the lemma. Thus $\varphi$ extends.
\end{proof}

\subsection{Choice number}

We show $\ch(G^2)\le4$ for $C_5$-free cacti with $\Delta\le3$. Root the block--cut tree
at a block $R$ and color top-down. Color $R$ first: an edge/vertex greedily; a cycle
$C_L$ ($L\ne5$) by choosing any $y_0$, giving it any list color, and applying
Lemma~\ref{lem:listcyc} with no extra forbidden color. Each later block is colored when
its parent cut vertex $c$ is colored ($\varphi(c)=a$), with parent-side neighbor $w$
colored ($\varphi(w)=b$) or absent: a pendant-edge child has $\le3$ colored neighbors and
a list of $4$; a cycle child uses Lemma~\ref{lem:listcyc}.

\begin{lemma}\label{lem:listcyc}
Let $L\ge3$, $L\ne5$. Consider $C_L^2$ with $y_0$ precolored, each $y_i$ ($i\ge1$) with
a list of size $\ge4$, and one common color $b$ forbidden at $y_1$ and $y_{L-1}$
(possibly none). Then a proper coloring exists.
\end{lemma}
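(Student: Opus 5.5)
The plan is to color $y_0$'s surroundings in two stages: first the two vertices $y_1,y_{L-1}$ adjacent to $y_0$, then the remaining path segment via Theorem~\ref{thm:ert}. Let $a$ be the precolor of $y_0$.

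\emph{Stage 1: residual lists.} In $C_L^2$ the vertex $y_0$ is adjacent to $y_1,y_2,y_{L-2},y_{L-1}$, so delete $a$ from these four lists. Also delete $b$ from the lists of $y_1$ and $y_{L-1}$. The residual lists then have size
\begin{itemize}
\item at least $2$ at $y_1$ and $y_{L-1}$,
\item at least $3$ at $y_2$ and $y_{L-2}$,
\item at least $4$ elsewhere.
\end{itemize}
The graph still to be colored is $H=C_L^2-y_0$. For $L\ge 6$ this is the square of the path $y_1\cdots y_{L-1}$ together with the single extra edge $y_1y_{L-1}$, since these two vertices are at distance $2$ through $y_0$. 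Greedy coloring fails at the end: the last vertex $y_{L-2}$ would face three colored neighbors with only three colors available. So I will not use a plain elimination order.

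\emph{Stage 2: color $y_1,y_{L-1}$, then apply Theorem~\ref{thm:ert}.} First give $y_1$ and $y_{L-1}$ distinct colors from their lists of size $\ge2$; they are adjacent in $H$. Then delete these colors from their neighbors and consider the path-square $Q$ on $y_2,\dots,y_{L-2}$.
\begin{itemize}
\item $y_2$ loses $a$ and $\varphi(y_1)$, so its list has size $\ge2$, and it has degree $2$ in $Q$.
\item $y_3$ loses $\varphi(y_1)$, so its list has size $\ge3$, with degree $\le3$.
\item Symmetrically at the other end: $y_{L-2}$ has list size $\ge2$ and $y_{L-3}$ has list size $\ge3$.
\item Interior vertices keep lists of size $4$ and have degree $4$.
\end{itemize}
In every case $|L(v)|\ge d_Q(v)$. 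For $L\ge7$, $Q$ is the square of a path on $L-3\ge4$ vertices. It is $2$-connected and is neither complete nor a cycle, hence not a Gallai tree, so Theorem~\ref{thm:ert} colors it.

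\emph{Stage 3: small cases.} Here coincidences in $C_L^2$ must be checked by hand.
\begin{itemize}
\item $L=3$: $y_1,y_2$ only need distinct colors from lists of size $\ge2$.
\item $L=4$: $y_2$ is adjacent to both $y_1$ and $y_3$ and has $\ge3$ colors available, so it survives any choice at $y_1,y_3$.
\item $L=6$: this is the delicate case and the main obstacle. Here $Q$ is the triangle $y_2y_3y_4$, a Gallai tree. Its residual lists have size $2$, $2$ and $2$, because $y_3$ is at distance $2$ from both $y_1$ and $y_5$. It fails only if all three residual lists are the same $2$-set. I would use the freedom in Stage 1 to avoid this. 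If the choice of $(\varphi(y_1),\varphi(y_5))$ produces equal $2$-lists, switch one of the two colors to another available one, or choose $\varphi(y_1)$ outside the original list of $y_3$ when possible. The goal is to break the equality of the three lists, and then the triangle is colorable by Theorem~\ref{thm:ert} (the equal-size-but-unequal-list case) or directly.
\end{itemize}
This case analysis is exactly where $L=5$ is excluded. There the analogous residual structure is the degenerate $K_4$ with lists $(2,3,3,2)$ mentioned in the introduction, which admits no such escape.
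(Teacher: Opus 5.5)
\textbf{Gap: the case $L=6$.} Your route is the paper's. You delete $y_0$, strip $a$ from $y_1,y_2,y_{L-2},y_{L-1}$ and $b$ from the ends, color $y_1,y_{L-1}$ distinctly, and finish the squared path with Theorem~\ref{thm:ert} for $L\ge7$. That part, and the cases $L=3,4$, are fine. The gap is $L=6$, the only case with real content. There you state a goal (``use the freedom in Stage~1 to break the equality'') but never show the freedom suffices. Your recipe, ``switch one of the two colors to another available one,'' also fails in one subcase. Suppose the reduced lists at $y_1$ and $y_5$ are both exactly $\{c_1,c_5\}$, the current colors. Then changing either color alone makes $y_1$ and $y_5$ clash, so no single switch exists.

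\textbf{How to close it.} First pin down the bad configuration. Suppose the triangle $y_2y_3y_4$ has all three residual lists equal to a $2$-set $S=\{p,q\}$. Then the reduced lists are forced: $L(y_3)=S\cup\{c_1,c_5\}$, $L(y_2)=S\cup\{c_1\}$ and $L(y_4)=S\cup\{c_5\}$, with $c_1,c_5\notin S$. Now consider three cases.
\begin{itemize}
\item If $L(y_1)$ contains some $x\notin\{c_1,c_5\}$ with $x\notin L(y_3)$, recolor $y_1$ with $x$. The residual list at $y_3$ becomes $\{p,q,c_1\}$.
\item If instead that $x$ lies in $L(y_3)$, then $x\in S$; recolor $y_1$ with $x$. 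The residual list at $y_2$ becomes $(S\setminus\{x\})\cup\{c_1\}\neq S$. The residual list at $y_4$ is still $S$, since $y_4$ is not adjacent to $y_1$.
\item The same two moves work at $y_5$. In the only remaining case, $L(y_1)=L(y_5)=\{c_1,c_5\}$, swap the two colors simultaneously. Then $y_2$ loses $c_5\notin L(y_2)$ and keeps three colors.
\end{itemize}
In every case the union of the three residual lists has size at least $3$, so Hall's condition colors the triangle. This is precisely the three-way case analysis in the paper.

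Note also that the triangle is itself a Gallai tree, so Theorem~\ref{thm:ert} as stated in the paper does not apply to it. The ``unequal lists'' version you allude to is the stronger Borodin--Erd\H{o}s--Rubin--Taylor characterization; here the direct Hall argument is what you need.
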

\begin{proof}
Deleting $y_0$ (color $a$) and forbidding $b$ at the ends leaves $P$ on
$v_1:=y_1,\dots,v_m:=y_{L-1}$ ($m=L-1$), with $v_iv_j\in E(P)$ iff $|i-j|\le2$, plus the
edge $v_1v_m$; reduced list sizes are $\ge2,\ge3,\ge4,\dots,\ge4,\ge3,\ge2$. Note
$L\ne5\iff m\ne4$.

\emph{$m\in\{2,3\}$.} An edge with lists $\ge2$; or a triangle with sizes
$(\ge2,\ge3,\ge2)$ (color the ends distinctly, the middle avoids two).

\emph{$m\ge6$.} Color $v_1,v_m$ distinctly and delete them; as $m\ge6$ no vertex meets
both, so each survivor loses $\le1$ color, and the residual squared path $Q$ on
$v_2,\dots,v_{m-1}$ satisfies $|L(v)|\ge d_Q(v)$ everywhere. Since $m-2\ge4$, $Q$ is
$2$-connected and neither complete nor an odd cycle (it has an induced $K_4$ minus an
edge), hence not a Gallai tree; Theorem~\ref{thm:ert} colors it.

\emph{$m=5$ (the $C_5$ obstruction in list form).} Here $P$ is the wheel with hub $v_3$
and rim $v_1v_2v_4v_5$, sizes $(\ge2,\ge3,\ge4,\ge3,\ge2)$. Color the adjacent ends
$v_1,v_5$ distinctly ($c_1\ne c_5$) and delete them; the residual $\{v_2,v_4,v_3\}$ is a
triangle with lists of size $\ge2$. A triangle with three $\ge2$-lists is colorable iff
their union has size $\ge3$ (Hall), i.e.\ unless all three are one common $2$-set. If a
choice forces a common $S=\{p,q\}$, then $L(v_3)=S\cup\{c_1,c_5\}$ (so $|L(v_3)|=4$),
$L(v_2)=S\cup\{c_1\}$, $L(v_4)=S\cup\{c_5\}$. If $L(v_1)\not\subseteq L(v_3)$, recolor
$v_1$ from $L(v_1)\setminus L(v_3)$, making $|L(v_3)\setminus\{c_1,c_5\}|\ge3$;
symmetrically for $v_5$. Else $L(v_1),L(v_5)\subseteq L(v_3)$: if $p\in L(v_1)$ set
$c_1:=p$, giving $L(v_4)\setminus\{c_5\}=S\ne\{q,c_1^{\mathrm{old}}\}=L(v_2)\setminus\{p\}$
(as $c_1^{\mathrm{old}}\notin S$); symmetrically for $v_5$; and if
$L(v_1)=L(v_5)=\{c_1^{\mathrm{old}},c_5^{\mathrm{old}}\}$, the swap
$c_1:=c_5^{\mathrm{old}}$, $c_5:=c_1^{\mathrm{old}}$ leaves $L(v_2)$ intact of size $3$.
In every case the triangle, hence $P$, is colorable.
\end{proof}

\begin{theorem}\label{thm:d3ch}
If $G$ is a cactus with $\Delta=3$ and no $C_5$ block, then $\ch(G^2)=4$.
\end{theorem}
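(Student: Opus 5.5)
The plan is to prove the two inequalities separately. For the lower bound, pick a vertex $v$ with $d_G(v)=3$. Its closed neighborhood $N_G[v]$ is a clique $K_4$ in $G^2$, so $\ch(G^2)\ge\chi(G^2)\ge4$. All the work is in the upper bound $\ch(G^2)\le4$. For that, fix an arbitrary assignment $L$ of lists of size $4$ to $V(G)$ and follow the top-down scheme described just before Lemma~\ref{lem:listcyc}: root the block--cut tree at a block $R$, color $R$ first, and then color each child block once its parent cut vertex has been colored.

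\emph{Coloring the root.} If $R$ is a vertex or an edge, color it greedily from its lists. If $R$ is a cycle $C_L$ (necessarily $L\ne5$), give $y_0$ any color from $L(y_0)$ and apply Lemma~\ref{lem:listcyc} with no forbidden color $b$. The result is a proper $L$-coloring of $R^2$. Since $R$ is the whole graph colored so far, this agrees with $G^2$ restricted to $V(R)$ by the distance argument of Lemma~\ref{lem:sq}.

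\emph{Inductive step.} Consider a child block $B$ with parent cut vertex $c$, where $c$ is already colored $a$. I need a bookkeeping fact: among the vertices already colored, the only ones within distance two of a private vertex of $B$ are $c$ and the neighbors of $c$ outside $B$. This is the content of Lemma~\ref{lem:sq}, applied to the subcactus consisting of the blocks colored so far together with $B$, in which $B$ is a leaf block. Distances among the earlier vertices are unaffected, because every block colored so far lies on the root side of $c$. (Blocks not yet colored play no role in this count, since they are handled later.)

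\emph{Counting colored neighbors.} Because $\Delta\le3$, I split by the type of $B$.
\begin{itemize}
\item If $B=\{c,u\}$ is a pendant edge, the colored vertices near $u$ lie in $N_G[c]\setminus\{u\}$, which has at most $3$ elements. Since $|L(u)|=4$, a color remains for $u$.
\item If $B$ is a cycle, two of the edges at $c$ lie in $B$, so $c$ has at most one other neighbor $w$. The block containing $cw$ may be colored already or may be a sibling colored later; in the latter case I treat $w$ as uncolored for now. Set $b=\varphi(w)$, or no $b$ if $w$ is absent or uncolored. Then $b\ne a$, because $cw\in E(G)$.
\end{itemize}
In the cycle case, the private vertices $y_1,y_2,y_{L-2},y_{L-1}$ must avoid $a$; this holds automatically because $y_0=c$ is precolored $a$ inside $C_L^2$. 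Only $y_1$ and $y_{L-1}$ are within distance two of $w$, and Lemma~\ref{lem:listcyc} forbids exactly $b$ there. So the lemma extends the coloring to $B$.

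\emph{Siblings and the order of processing.} The main point needing care is that $c$ may have several child blocks. With $\Delta\le3$, $c$ has at most two child blocks only when $B$ is a pendant edge, or when all its blocks are edges. Suppose two pendant children $u,u'$ of $c$ are colored one after the other. Then $u'$ sees $c$, $u$, and possibly a third neighbor of $c$, so at most $3$ colored vertices, and this is fine. Suppose instead a cycle child is processed after a sibling edge child $cw$. Then $w$ is already colored and is used as $b$. If $w$ is colored after the cycle, it is a pendant (or edge-block) vertex seeing at most $3$ colored vertices, namely $c$, $y_1$ and $y_{L-1}$. This is still at most $3$, so $w$ always has a free color in its list of size $4$.

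Hence every vertex of $G$ receives a color from its list and the coloring is proper in $G^2$, giving $\ch(G^2)\le4$. Combined with the lower bound, $\ch(G^2)=4=\chi(G^2)$ (the latter by Theorem~\ref{thm:d3chi}).
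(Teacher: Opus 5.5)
Your proof is correct and is essentially the paper's argument: root the block--cut tree at a block, color top-down, extend to an edge child greedily (at most three colored vertices in $N_G[c]\setminus\{u\}$) and to a cycle child via Lemma~\ref{lem:listcyc} with $a=\varphi(c)$, $b=\varphi(w)$, and get the lower bound from the clique $N_G[v]$. One simplification: since the root is a block, the parent block of $c$ uses at least one edge at $c$, so for a cycle child the third edge $cw$ is the parent block itself and $w$ is always already colored; this makes your case of a cycle child processed before a sibling edge vacuous, though you handled it correctly.
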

\begin{proof}
The rooted coloring above with Lemma~\ref{lem:listcyc}; lower bound
Theorem~\ref{thm:d3chi}.
\end{proof}

\subsection{The classification}

\begin{proof}[Proof of Theorems~\ref{thm:subcubic} and~\ref{thm:main}]
Let $\Delta=3$. If $G$ is $C_5$-free, Theorems~\ref{thm:d3chi} and~\ref{thm:d3ch} give
$\chi(G^2)=\ch(G^2)=4$. If $G$ has a $C_5$ block $B$, then $B^2=K_5\subseteq G^2$ forces
$\ch(G^2)\ge\chi(G^2)\ge5$, while Theorem~\ref{thm:degen} gives $\ch(G^2)\le5$; so both
equal $5$. This is Theorem~\ref{thm:subcubic}. For Theorem~\ref{thm:main}: $\Delta\ge4$
is Corollary~\ref{cor:high}; $\Delta=3$ is the above; and $\Delta\le2$ (path or cycle)
gives $\ch=\chi$ by $2$-degeneracy of $P_n^2$ and by~\cite{pw} for cycles, with the
stated classical values.
\end{proof}

\section{Verification and reproducibility}\label{sec:comp}

The proofs are self-contained; this section records an independent machine check, and
one place where it mattered. The elimination order in Theorem~\ref{thm:degen} was
\emph{wrong} in a first draft: it removed cycle vertices ``middle-outward'' and
re-detected leaf blocks after each removal, which fails on a $C_5$ block because a
neighbor of $c$ is then eliminated while it still sees, in the fixed $G^2$, two vertices
it must avoid. A verification script exhibited explicit failing instances; the corrected
order above (remove all non-$c$-adjacent cycle vertices first) has maximum removal degree
$\le\max(\Delta,4)$ on every instance since. We regard this as a reason to trust the
final argument, not as an experiment.

The remaining checks are confirmatory: the constructive coloring of
Theorem~\ref{thm:d3chi} was run on many random $C_5$-free subcubic cacti and
cross-checked against exact chromatic numbers; the degree-choosability step of
Lemma~\ref{lem:listcyc} for $L\ge7$ was validated on adversarial list assignments; and
the wheel case $L=6$ was confirmed over all list profiles of the relevant sizes on small
color universes. The verification scripts (Python/\texttt{networkx}) are provided as
supplementary material and archived at
\url{https://github.com/Vaibhavs25/cactus2distance}.

\section{Concluding remarks}

The subcubic case is governed entirely by the $5$-cycle, appearing as the complete graph
$C_5^2=K_5$, as the Frobenius number of $\{3,4\}$, and as a degenerate $K_4$
list-coloring instance; the classification and the equality $\ch(G^2)=\chi(G^2)$ follow.
For $\Delta\ge4$ a single elimination order recovers the (largely known) value $\Delta+1$
and fills the two cases $\Delta\in\{4,5\}$.

The natural next step is to push the exact small-degree analysis, $\Delta\in\{3,4,5\}$,
from cacti to the full outerplanar or $K_4$-minor-free class, where $\chi(G^2)\le\Delta+2$
is known but the exact value and the extremal configurations are not pinned down; the
$K_4$-minor-free extremal value $\lfloor3\Delta/2\rfloor+1$ is attained by non-cactus
graphs, so the obstruction structure there is strictly richer than a single forbidden
block.


\begin{thebibliography}{99}
\bibitem{ah} G.~Agnarsson, M.~M.~Halld\'orsson, \emph{On colorings of squares of
  outerplanar graphs}, Proc.\ SODA 2004, 244--253; see also arXiv:0706.1526.
\bibitem{borodin} O.~V.~Borodin, \emph{Criterion of chromaticity of a degree
  prescription} (in Russian), Abstracts IV All-Union Conf.\ Theor.\ Cybernetics, 1977.
\bibitem{ck} D.~W.~Cranston, S.-J.~Kim, \emph{List-coloring the square of a subcubic
  graph}, J.\ Graph Theory \textbf{57} (2008), 65--87.
\bibitem{ert} P.~Erd\H os, A.~L.~Rubin, H.~Taylor, \emph{Choosability in graphs},
  Congr.\ Numer.\ \textbf{26} (1979), 125--157.
\bibitem{hasanvand} M.~Hasanvand, \emph{The List Square Coloring Conjecture fails for
  bipartite planar graphs and their line graphs}, arXiv:2211.00622 (2022).
\bibitem{hw08} T.~J.~Hetherington, D.~R.~Woodall, \emph{List-colouring the square of a
  $K_4$-minor-free graph}, Discrete Math.\ \textbf{308} (2008), 4037--4043.
\bibitem{hw11} T.~J.~Hetherington, D.~R.~Woodall, \emph{List-colouring the square of an
  outerplanar graph}, Ars Combin.\ \textbf{101} (2011), 333--342.
\bibitem{kimpark} S.-J.~Kim, B.~Park, \emph{Counterexamples to the List Square Coloring
  Conjecture}, J.\ Graph Theory \textbf{78} (2015), 239--247.
\bibitem{kw} A.~V.~Kostochka, D.~R.~Woodall, \emph{Choosability conjectures and
  multicircuits}, Discrete Math.\ \textbf{240} (2001), 123--143.
\bibitem{lwz} K.-W.~Lih, W.-F.~Wang, X.~Zhu, \emph{Coloring the square of a
  $K_4$-minor free graph}, Discrete Math.\ \textbf{269} (2003), 303--309.
\bibitem{pw} D.~R.~Prowse, D.~R.~Woodall, \emph{Choosability of powers of circuits},
  Graphs Combin.\ \textbf{19} (2003), 137--144.
\end{thebibliography}
\end{document}